\newtheorem{theorem}{Theorem}
\newtheorem{lemma}{Lemma}
\theoremstyle{remark}
\newtheorem{remark}{Remark}
\def\tht{\theta}
\def\Om{\Omega}
\def\om{\omega}
\def\e{\varepsilon}
\def\g{\gamma}
\def\G{\Gamma}
\def\l{\lambda}
\def\p{\partial}
\def\D{\Delta}
\def\E{\mbox{\rm e}}
\def\d{\delta}
\def\L{\Lambda}
\def\Odr{\mathcal{O}}
\def\H{W_2}
\def\iu{\mathrm{i}}
\def\He{\mathcal{H}_\e}
\def\he{\mathfrak{h}_\e}
\def\Hep{\mathcal{H}_\e^\tht}\def\hep{\mathfrak{h}_\e^\tht}
\def\Ho{\mathcal{H}_0}
\def\ho{\mathfrak{h}_0}
\def\Ups{\Upsilon}
 \DeclareMathOperator{\RE}{Re}
 \DeclareMathOperator{\spec}{\sigma}
\DeclareMathOperator{\essspec}{\sigma_{e}}
\begin{document}

\title{On band spectrum of Schr\"odinger operator in periodic system of domains coupled by small windows}

\author{\firstname{D.~I.}~\surname{Borisov}}
\email{BorisovDI@yandex.ru} \affiliation{Institute of Mathematics CS USC RAS \& Bashkir State Pedagogical University}

 
\begin{abstract}
We consider a periodic system of domains coupled by small windows. In such domain we study the band spectrum of a Schr\"odinger operator subject to Neumann condition.  We show that near each isolated eigenvalue of the similar operator but  in the periodicity cell, there are several non-intersecting bands of the spectrum for the perturbed operator. We also discuss the position of the points at which the band functions attain the edges of each band.
\end{abstract}

\maketitle

\section{Introduction}

In this paper we study the spectrum of a Schr\"odinger operator subject to Neumann condition in a periodic system of domains coupled by small windows, see Figure~1. The periodicity cell can be a bounded or unbounded domain. As windows close, the domain splits into a set of decoupled domains and the spectrum of the similar operator in such domain is an infinite sequence of isolated eigenvalues of infinite multiplicities. The infinite multiplicity appears since we consider the operator in an infinite sequence of identical decoupled domains. In the presence of windows, each such isolated eigenvalue generates a band in the spectrum of the perturbed operator. Our aim is to describe the structure of these bands.

There is a series of the works devoted to studying simlar problems \cite{BRT}, \cite{Na3}, \cite{Na5}, \cite{Pan}, \cite{Yo}. The usual assumption was that the periodicity cell is a bounded domain. The operator was either the Laplacian \cite{BRT}, \cite{Na3}, \cite{Pan}, \cite{Yo}, or a more general elliptic operator  \cite{Na5}. The boundary condition on the boundary were Dirichlet, Neumann or Steklov condition. The main result was either the estimates for the bands in the essential spectrum or the asymptotics for the band functions of the perturbed operator. In the latter case the instructive results were obtained under the assumption that the limiting eigenvalue is a simple one for the operator in a periodicity cell with no perturbation. Here the obtained asymptotics allowed the authors to describe the location of the band generated by the perturbed band functions. Once the limiting eigenvalue is multiple, in a general situation it splits into several perturbed band functions each of those can generate a band in the spectrum. At the same time, these bands can overlap and glue in this way into a bigger band. To the best of our knowledge, this situation was not analyzed before in details and no results are known for the structure of the band in the mentioned case.

\begin{figure}[t]
\includegraphics[scale=0.6]{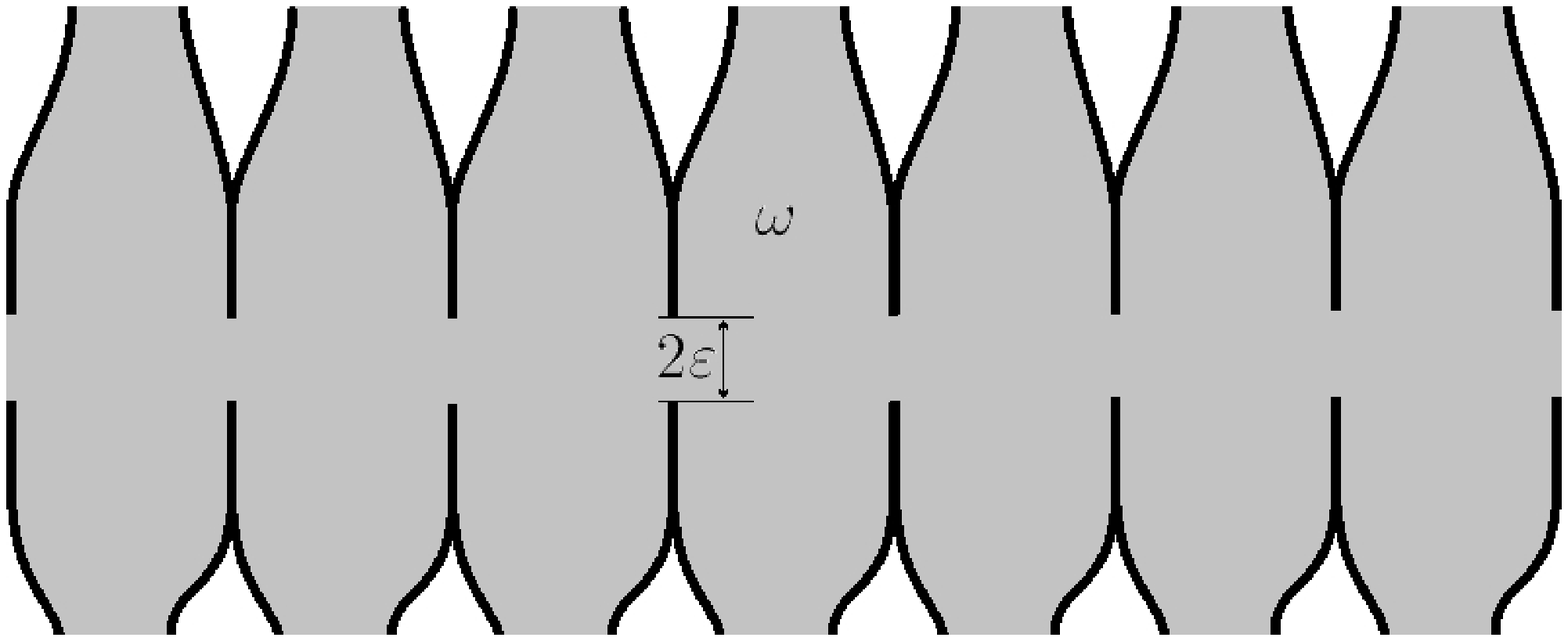}
\centerline{Figure 1. Domain}
\end{figure}

In the present paper  we consider the band functions converging to an isolated eigenvalue of the limiting operator in a periodicity cell. The multiplicity of the limiting eigenvalue can be arbitrary. Our result is the leading terms of the asymptotic expansions for the perturbed band functions converging to this limiting eigenvalue. It allows us to describe the structure of the associated bands. We show that in a general situation a double limiting eigenvalue generates two separated bands and their lengths are of different order. Moreover, the edges of a bigger band are attained by the associated band function in the center or at the end-points of the Brillouin zone. For the smaller band the situation can be different and the edges can be attained in the internal points of the Brillouin zone, see discussion in the next section. If the multiplicity of the limiting eigenvalue is three or more, there still exist two separated bands with the described properties. There can be also additional bands and we make a conjecture on their structure.

\section{Formulation of problem and main results}

Let $x=(x_1,x_2)$ be Cartesian coordinates in $\mathds{R}^2$ and $\Om$ be a domain in $\mathds{R}^2$ with piecewise $C^1$-boundary. We assume that domain  $\Om$ is invariant under the shift $(x_1,x_2)\to (x_1+1,x_2)$, the points 
$(n,0)$, $n\in \mathds{Z}$, are interior for $\Om$, and the lines 
$\{x: x_1=n\}$, $n\in \mathds{Z}$, are non-tangent to the boundary of $\Om$.  By $\e$ we denote a small positive parameter, and we let $\Om_\e:=\Om\setminus\bigcup\limits_{n\in\mathds{Z}}\{x: x_1=n, |x_2|\geqslant \e\}$.
The main object of our study is a Schr\"odinger operator $\He:=-\D+V$ in $L_2(\Om_\e)$ subject to the Neumann boundary condition on $\p\Om_\e$. 
Here $V\in L_\infty(\Om)$ is a real-valued potential satisfying the identity $V(x_1+1,x_2)=V(x_1,x_2)$ for all $x\in\Om$. Rigourously we define $\He$ as the self-adjoint operator in $L_2(\Om_\e)$ associated with the lower-semibounded closed symmetric sesquilinear form
\begin{equation*}
\he(u,v):=(\nabla u,\nabla v)_{L_2(\Om_\e)}+(Vu,v)_{L_2(\Om_\e)} \quad \text{in}\quad L_2(\Om_\e)
\end{equation*}
with the domain $\H^1(\Om_\e)$. Our main aim is to study the spectrum of $\He$ as $\e\to+0$.

Since operator $\He$ is periodic, we employ Floquet-Bloch theory to describe its spectrum. Namely, we introduce the periodicity cell $\om:=\Om\cap\{x: 0<x_1<1\}$. The cell can be either bounded or unbounded. For $\tht\in[0,2\pi)$ we consider the operator $\Hep:=-\D+V$ in $L_2(\Om)$ subject to the Neumann condition on $\p\om\setminus\g_\e$, $\g_\e:=\g_\e^+\cup\g_\e^-$, $\g_\e^+:=\{(1,x_2): |x_2|<\e\}$,  $\g_\e^-:=\{(0,x_2): |x_2|<\e\}$,  while on $\g_\e$ we impose quasi-periodic boundary conditions
\begin{equation}\label{2.1}
u\big|_{\g_\e^+}=\E^{\iu\tht} u\big|_{\g_\e^-},\quad \frac{\p u}{\p x_1}\bigg|_{\g_\e^+}=\E^{\iu\tht} \frac{\p u}{\p x_1}\bigg|_{\g_\e^-}.
\end{equation}
As operator $\He$, we introduce $\Hep$ as the self-adjoint operator associated with the sesquilinear form
\begin{equation*}
\hep(u,v):=(\nabla u,\nabla v)_{L_2(\Om)}+(Vu,v)_{L_2(\Om)}, \quad\text{in}\quad L_2(\om)
\end{equation*}
whose domain consists of the functions in $\H^1(\Om)$ satisfying first condition in (\ref{2.1}). The spectrum of $\He$ has a band structure and reads as
\begin{equation*}
\spec(\He)=\bigcup\limits_{\tht\in[0,2\pi)} \spec(\Hep),
\end{equation*}
where the symbol $\spec(\cdot)$ stands for the spectrum of an operator.

By $\Ho$ we denote the operator $-\D+V$ in $L_2(\om)$ subject to the Neumann condition on $\p\om$. As above, it is the self-adjoint operator associated with the sesquilinear form
\begin{equation*}
\ho(u,v):=(\nabla u,\nabla v)_{L_2(\om)}+(Vu,v)_{L_2(\om)}\quad \text{in}\quad L_2(\om)
\end{equation*}
on the domain $\H^1(\om)$. Since  $\om$ is not necessarily bounded, the spectrum of $\Ho$ can involve both essential and discrete part.

The main aim of this paper is to study the behavior of the spectrum of $\Hep$ as $\e\to+0$. Denoting by $\essspec(\cdot)$ the essential spectrum of an operator, by \cite{Bi} we know that $\essspec(\He)$ is independent of the boundary conditions on $\g_\e$ and thus $\essspec(\Hep)=\essspec(\Ho)$ for all $\e$ and $\tht$. Hence, it is a fixed set independent of $\e$ and $\tht$. At the same time, the eigenvalues of $\Hep$ do depend both on $\e$ and $\tht$. By analogy with \cite{Ga} one can show that the discrete eigenvalues of $\Hep$ converge either to the discrete eigenvalues of $\Ho$ or to the thresholds of $\essspec(\Ho)$. In the former case the total multiplicity is preserved. The convergence of the eigenvalues is uniform in $\tht$.

Let $\l_0$ be a discrete eigenvalue of $\Ho$ of multiplicity $k$ and $\psi_0^{(j)}$, $j=1,\ldots,k$, are the associated eigenfunctions orthonormalized in $L_2(\om)$. For any linear combination $\psi$ of $\psi_j$ we define
\begin{equation*}
l_\tht(\psi):=\psi(M_+)\E^{-\iu\tht}-\psi(M_-),\quad l_\tht'(\psi):=\frac{\p\psi}{\p x_2}(M_+)\E^{-\iu\tht}+\frac{\p\psi}{\p x_2}(M_-).
\end{equation*}
We denote
$\Psi_0^{(j)}(x,\tht):=\sum\limits_{i=1}^{n} a_{ji}(\tht)\psi_0^{(i)}(x)$,
where $a=(a_{ij}(\tht))_{i,j=1,\ldots,n}$ is a unitary matrix with complex entries. We fix this matrix by the requirements
\begin{align}
l_\tht(\Psi_0^{(j)})=0,
\quad
j=2,\ldots,k,\qquad l_\tht'(\Psi_0^{(j)})=0,\quad j=3,\ldots, k.\label{3.5a}
\end{align}
where $M_-:=(0,0)$, $M_+:=(1,0)$. We shall show later in Lemma~\ref{lm3.1} that these identities can be satisfied.

We indicate $L:=(l_\tht(\psi_1),\ldots,l_\tht(\psi_k))$, $L':=(l_\tht'(\psi_1),\ldots,l_\tht'(\psi_k))$.

Our  main result is

\begin{theorem}\label{th1}
Suppose
\begin{equation}\label{2.10}
|\psi_1(M_+)|\not=|\psi_1(M_-)|.
\end{equation}
There exist exactly $k$ eigenvalues $\l_\e^{(j)}$, $j=1,\ldots,k$, of $\Hep$ (counting multiplicity) converging to $\l_0$ as $\e\to+0$. They can be ordered so that first two of them have the asymptotics
\begin{align}
&\l_\e^{(1)}(\tht)=\l_0+\l_{0,1}^{(1)}(\tht)\ln^{-1}\e+\Odr(\ln^{-2}\e),\label{2.5}
\\
&\l_\e^{(2)}(\tht)=\l_0+\l_{0,2}^{(2)}(\tht)\e^2+\Odr(\e^2\ln^{-1}\e),\label{2.6}
\end{align}
while the other satisfy
\begin{equation}\label{2.7}
\l_\e^{(j)}(\tht)=\l_0+\Odr(\e^3),\quad j=3,\ldots,k.
\end{equation}
Here
\begin{equation}
\l_{0,1}^{(1)}(\tht)=-\frac{\pi}{2} \|L\|_{\mathds{C}^k}^2,
\quad \l_{1,0}^{(2)}=\frac{\pi}{8}
\frac{\|L\|_{\mathds{C}^k}^2\|L'\|_{\mathds{C}^k}^2
-|(L',L)_{\mathds{C}^k}|^2}{\|L\|_{\mathds{C}^k}^2}.
\label{2.8a}
\end{equation}
The estimates for the error terms are uniform in $\tht\in[0,2\pi)$.
\end{theorem}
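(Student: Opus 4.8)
\medskip

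The plan is to construct the eigenvalues $\l_\e^{(j)}(\tht)$ by a matched-asymptotics / boundary-layer scheme combined with a reduction to a finite-dimensional spectral problem on the $k$-dimensional eigenspace of $\l_0$. The key small-parameter feature is that the windows $\g_\e$ have width $2\e$, and in two dimensions a small Neumann window produces a logarithmic boundary layer: near $M_\pm$ the solution looks like a combination of the constant mode and the harmonic function behaving as $\ln r$, whose matching with the outer expansion generates the $\ln^{-1}\e$ and $\e^2$ scales seen in \eqref{2.5}--\eqref{2.6}. Concretely, I would first establish the convergence statement (exactly $k$ eigenvalues near $\l_0$, uniformly in $\tht$) by a standard argument as indicated in the text via \cite{Ga}, so that for small $\e$ one may work inside the total eigenprojector $P_\e(\tht)$ of $\Hep$ onto the cluster near $\l_0$, which has rank $k$.

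\medskip

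Next I would set up the asymptotic ansatz. Write the formal outer expansion $u_\e = \psi + \e(\dots) + \dots$ with $\psi = \sum_i c_i \psi_0^{(i)}$ in the kernel of $\Ho-\l_0$, and inner expansions near $M_+$ and $M_-$ in the stretched variables $\xi = (x-M_\pm)/\e$; the inner problem is the Neumann Laplacian on a half-plane (or two half-planes joined through the slit of width $2$), whose relevant bounded-at-infinity solution grows like a multiple of $\ln|\xi|$. Matching the $\ln|\xi|$ behaviour of the inner solution to the $\ln r$ singularity of the outer corrector forces a solvability (Fredholm) condition: projecting the corrector equation onto $\psi_0^{(i)}$ yields a $k\times k$ matrix pencil for the leading corrections to $\l_0$. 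Because of the structure of the jump conditions \eqref{2.1} the functionals $l_\tht,l_\tht'$ enter exactly through the vectors $L,L'$, and the quadratic forms $\|L\|^2$, $\|L'\|^2$, $|(L',L)|^2$ appear as in \eqref{2.8a}. The normalization \eqref{3.5a} (proved in Lemma~\ref{lm3.1}) diagonalizes this matrix: in the basis $\Psi_0^{(j)}$, only $\Psi_0^{(1)}$ feels the leading logarithmic coupling (hence the $\ln^{-1}\e$ eigenvalue with $\l_{0,1}^{(1)}=-\tfrac\pi2\|L\|^2$), only $\Psi_0^{(1)},\Psi_0^{(2)}$ feel the order-$\e^2$ coupling (giving \eqref{2.6}), and $\Psi_0^{(j)}$, $j\geqslant3$, decouple to the order under consideration, which yields \eqref{2.7}. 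Hypothesis \eqref{2.10} is precisely what guarantees $\|L\|\neq0$, so that the leading term $\l_{0,1}^{(1)}$ is nonzero and the two scales $\ln^{-1}\e$ and $\e^2$ are genuinely separated.

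\medskip

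I would then convert these formal computations into rigorous estimates. Using the constructed quasimodes $\Psi_\e^{(j)}$ (outer expansion glued to inner boundary layers via a cutoff), one checks $\|(\Hep-\L_\e^{(j)})\Psi_\e^{(j)}\|\leqslant C\,r_\e\|\Psi_\e^{(j)}\|$ with $r_\e$ of the stated order, and invokes the spectral theorem to place true eigenvalues within $r_\e$ of the $\L_\e^{(j)}$. To get \emph{exactly} $k$ eigenvalues with the asserted asymptotics and no others, I would combine this with the rank-$k$ statement for $P_\e(\tht)$ and an almost-orthogonality estimate for the family $\{\Psi_\e^{(j)}\}$, so that the quasimodes span (to leading order) the range of $P_\e(\tht)$; then a finite-dimensional perturbation argument inside this range identifies the eigenvalues order by order. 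Uniformity in $\tht$ follows because every constant in the boundary-layer construction depends on $\tht$ only through the smooth, bounded data $L(\tht),L'(\tht)$ and $a_{ij}(\tht)$.

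\medskip

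The main obstacle I anticipate is twofold: (i) constructing the inner (boundary-layer) solution near the window with the correct $\ln|\xi|$ asymptotics and controlling it in weighted Sobolev norms — this is the delicate two-dimensional logarithmic-capacity analysis that produces the constants $\pi/2$ and $\pi/8$ — and (ii) handling the case where the periodicity cell $\om$ is \emph{unbounded}, so that $\l_0$ sits below or inside gaps of $\essspec(\Ho)$ and one must control the resolvent of $\Ho-\l_0$ on the orthogonal complement of the eigenspace with decay estimates uniform in $\tht$. Verifying that the normalization \eqref{3.5a} is admissible and that it indeed block-diagonalizes the reduced matrix at \emph{both} the $\ln^{-1}\e$ and the $\e^2$ orders is the combinatorial heart of the argument, and is where the ordering of the $\l_\e^{(j)}$ in the statement comes from.
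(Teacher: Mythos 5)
Your proposal is correct and follows essentially the same route as the paper: a formal construction by matched asymptotic expansions (outer expansion on the eigenspace, logarithmic inner layers at $M_\pm$, solvability conditions reduced to a $k\times k$ problem diagonalized by the normalization (\ref{3.5a}) of Lemma~\ref{lm3.1}), with the rigorous justification via quasimodes and the rank-$k$ projector delegated to standard arguments exactly as the paper does by citing \cite{Izv03}, \cite{Ga1}. The only substantive content you leave implicit --- the explicit inner solutions $X_0$, $X_1$ and the resulting constants $\pi/2$, $\pi/8$ --- is precisely what the paper's formal construction supplies, so there is no divergence of method.
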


This theorem gives an opportunity to describe the position of bands in the spectrum for $\He$. Namely, let $\l_0$ be a discrete eigenvalue of $\Ho$. Then in a small but fixed neighborhood of $\l_0$ the spectrum of $\He$ is generated by the eigenvalues $\l_\e^{(j)}(\tht)$, $j=1,\ldots,k$, described in Theorem~1, and these bands are $\{\l_\e^{(j)}(\tht): \tht\in[0,2\pi)\}$. As it follows from (\ref{2.6}), the  first of these bands is the segment
\begin{equation*}
\Ups_\e^{(1)}:=[\l_0+\L_1^-|\ln\e|^{-1}+\Odr(|\ln\e|^{-2}), \l_0+\L_1^+|\ln\e|^{-1}+\Odr(|\ln\e|^{-2})],
\end{equation*}
where $\L_1^-:=-\max\limits_{[0,2\pi]} \l_{0,1}^{(1)}(\tht)$,
$\L_1^+:=-\min\limits_{[0,2\pi]} \l_{0,1}^{(1)}(\tht)$. It is easy to see that the function $\tht\mapsto \l_{0,1}^{(1)}(\tht)$ achieves their minima and maxima at the points $\tht=0$, $\tht=\pi$, $\tht=2\pi$, and due to (\ref{2.10}) both $\L_1^-$ and $\L_1^+$ are strictly positive. If $\l_0$ is a degenerate eigenvalue, then there exists one more band
\begin{equation*}
\Ups_\e^{(2)}:=[\l_0+\L_2^-\e^{2}+\Odr(\e^2|\ln\e|^{-1}), \l_0+\L_2^+\e^2+\Odr(\e^2|\ln\e|^{-1})],
\end{equation*}
where $\L_2^-:=\min\limits_{[0,2\pi]} \l_{1,0}^{(2)}(\tht)$,
$\L_2^+:=\max\limits_{[0,2\pi]} \l_{1,0}^{(2)}(\tht)$. Comparing this band with $\Ups_\e^{(1)}$, we see that these bands do not intersect and they are located to the right from $\l_0$. There  is a gap between these bands; its length is of order $\Odr(|\ln\e|^{-1})$. Another really interesting feature of the second band is the values of $\tht$ at which the function $\l_\e^{(2)}(\tht)$ attains the edges of $\Ups_\e^{(2)}$. These values of $\tht$ do depend of the numbers $\psi_j(M_+)$, $\psi_j(M_-)$, $\frac{\p\psi_j}{\p x_2}(M_+)$, $\frac{\p\psi_j}{\p x_2}(M_-)$ and they can be found as the zeroes of $\big(\l_{0,2}^{(2)}\big)'$ up to a small error. We do not provide these calculations here since they are very bulky. Instead of this we just adduce two typical graphs of $\l_{0,2}^{(2)}$ which we made for $k=2$ assuming various values of $\psi_j(M_\pm)$, $\frac{\p\psi_j}{\p x_2}(M_\pm)$, see Figures~1,~2. In Figure~1 we assume that
\begin{align*}
&\psi_1(M_+)=1,\quad \psi_1(M_-)=2,\quad\psi_2(M_+)=1,\quad \psi_1(M_-)=3,\quad
\\
&\frac{\p\psi_1}{\p x_2}(M_+)=\frac{3}{2},\quad \frac{\p\psi_1}{\p x_2}(M_-)=\frac{5}{2},\quad \frac{\p\psi_2}{\p x_2}(M_+)=\frac{1}{2},\quad \frac{\p\psi_2}{\p x_2}(M_-)=2.
\end{align*}
while in Figure~2 similar identities are
\begin{align*}
&\psi_1(M_+)=1,\quad \psi_1(M_-)=2,\quad\psi_2(M_+)=-1,\quad \psi_1(M_-)=3,\quad
\\
&\frac{\p\psi_1}{\p x_2}(M_+)=\frac{3}{2},\quad \frac{\p\psi_1}{\p x_2}(M_-)=\frac{5}{2},\quad \frac{\p\psi_2}{\p x_2}(M_+)=-\frac{1}{2},\quad \frac{\p\psi_2}{\p x_2}(M_-)=2.
\end{align*}
As we see, in the former case the minima and maxima are attained at $\tht=0$, $\tht=\pi$, $\tht=2\pi$, while in the latter case the maxima are attained at two internal points located symmetrically w.r.t. $\tht=\pi$. This can be regarded as one more example of a periodic operator with a band function attaining its extrema in internal points of the Brillouin zone, see recent discussion in \cite{BP1}, \cite{BP2}, \cite{BP3}, \cite{Na2}. Of course, there is still an open question on existence of a domain $\om$ and a potential $V$ giving required values of $\psi_j(M_\pm)$, $\frac{\p\psi_j}{\p x_2}(M_\pm)$, but we believe that this is a rather general situation for non-symmetric domains and potentials.

In the case of a degenerate eigenvalue $\l_0$ of multiplicity greater than two, in addition to the above described bands there are also additional bands associated with $\l_\e^{(j)}(\tht)$, $j\geqslant 3$. Our formulae (\ref{2.7}) do not give any precise information about the bands location. They just say that the distance from these bands to $\l_0$ is of order at most $\Odr(\e^3)$ and hence in a general situation they are separated from two aforementioned bands. Nevertheless, proceeding as in the proof of Theorem~\ref{th1}, one can construct the leading terms in the asymptotics for all the eigenvalues $\l_\e^{(j)}(\tht)$, $j\geqslant 3$. We expect that these terms are of different orders like for $\l_\e^{(1)}$ and $\l_\e^{(2)}$, i.e., for greater index $j$ the leading term for $\l_\e^{(j)}$ is smaller. We then conjecture that in a general situation there are $k$ separated bands associated $\l_\e^{(j)}(\tht)$, $j=1,\ldots,k$, and the lengths of these bands are of different order of smallness. In Figure~4 we show schematically the expected structure of these bands. It is also natural to expect that the edges of these bands can be attained by the associated band functions in the internal points of the Brillouin zone.

\section{Proof of Theorem~\ref{th1}}

To construct the asymptotics for $\l_\e^{(j)}$, $j=1,\ldots,k$, we follow a standard scheme. Namely, first we construct them formally by the method of matching asymptotic expansions \cite{Il}, and after that we need to justify the formal asymptotics by estimating the error terms. The latter is very standard and can be done completely in the same way as, for instance, in \cite{Izv03}, \cite{Ga1}. This is why in this section we address only the formal construction.

We begin with an auxiliary lemma.

\begin{tabular}{ccc}
\includegraphics[scale=0.3]{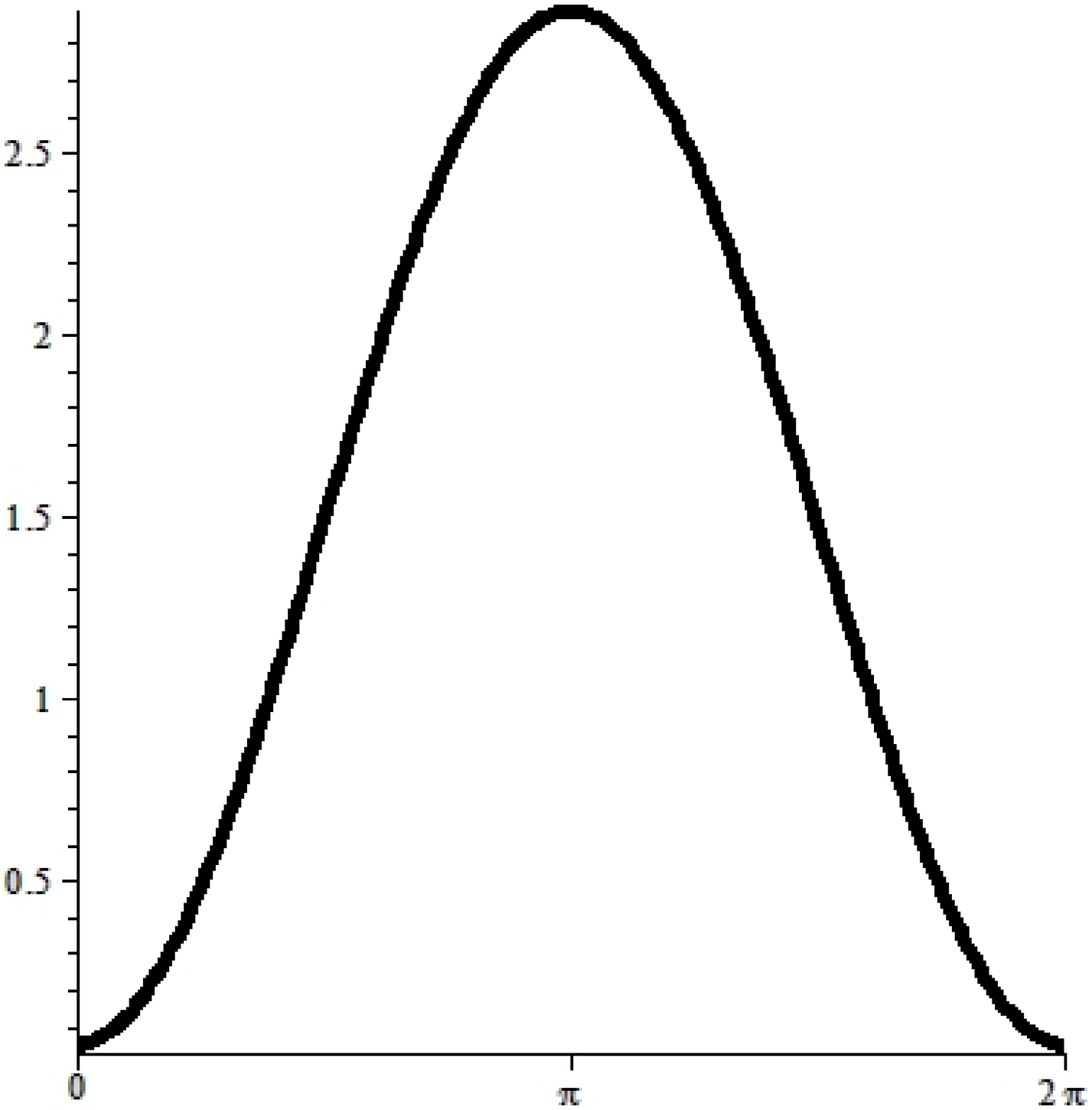}
&\hspace{2 true cm} &
\includegraphics[scale=0.3]{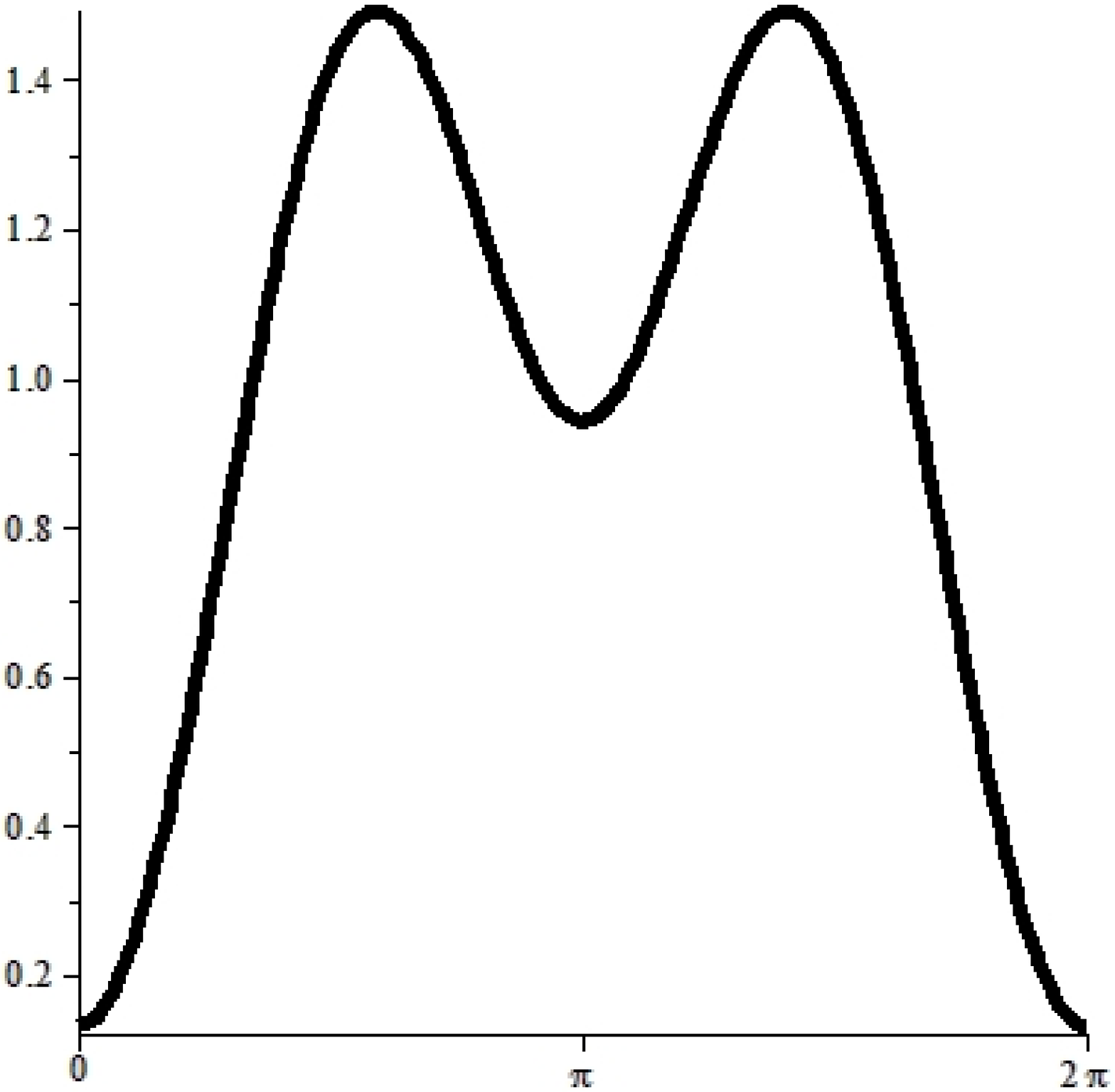}
\\
\parbox{5 true cm}{\small Figure 2: Maxima in internal points  of Brillouin zone} &  & \parbox{5 true cm}{\small Figure 3: Maximum in the center  of Brillouin zone}
\end{tabular}
\bigskip
\bigskip

\begin{lemma}\label{lm3.1}
Suppose (\ref{2.10}). Functions $a_{ij}(\tht)$ can be chosen so that relations 
(\ref{3.5a}) 
are satisfied. At that,
\begin{equation}\label{3.22}
|l_\tht(\Psi_0^{(1)})|^2= \|L\|_{\mathds{C}^k}^2,
\quad
|l'_\tht(\Psi_0^{(2)})|^2=\frac{\|L\|_{\mathds{C}^k}^2\|L'\|_{\mathds{C}^k}^2
-|(L',L)_{\mathds{C}^k}|^2}{\|L\|_{\mathds{C}^k}^2}.
\end{equation}
\end{lemma}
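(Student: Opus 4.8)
The plan is to reduce (\ref{3.5a}) to a statement of finite-dimensional linear algebra in $\mathds{C}^k$ and then to realise the matrix $a(\tht)$ by a single Gram--Schmidt orthogonalization. Since $l_\tht$ and $l_\tht'$ are $\mathds{C}$-linear functionals on the span of $\psi_0^{(1)},\ldots,\psi_0^{(k)}$, the definition of $\Psi_0^{(j)}$ gives, for each $j$,
\begin{equation*}
l_\tht(\Psi_0^{(j)})=\sum_{i=1}^{k}a_{ji}(\tht)\,l_\tht(\psi_0^{(i)})=(aL)_j,\qquad l_\tht'(\Psi_0^{(j)})=(aL')_j,
\end{equation*}
where $L,L'$ are regarded as the column vectors in $\mathds{C}^k$ with components $l_\tht(\psi_0^{(i)})$ and $l_\tht'(\psi_0^{(i)})$. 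Hence (\ref{3.5a}) holds exactly when $a$ is a unitary matrix sending $L$ to a multiple of the first coordinate vector $e_1$ and $L'$ into $\mathrm{span}\{e_1,e_2\}$; equivalently, $a$ must carry the (at most two-dimensional) subspace $\mathrm{span}\{L,L'\}$ into $\mathrm{span}\{e_1,e_2\}$ with $L$ landing on the $e_1$-axis.

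Next I would record that hypothesis (\ref{2.10}) forces $L(\tht)\neq0$ for every $\tht$: since $|\psi_0^{(1)}(M_+)\E^{-\iu\tht}|=|\psi_0^{(1)}(M_+)|\neq|\psi_0^{(1)}(M_-)|$, the two terms in $l_\tht(\psi_0^{(1)})=\psi_0^{(1)}(M_+)\E^{-\iu\tht}-\psi_0^{(1)}(M_-)$ cannot cancel, so $\|L\|_{\mathds{C}^k}^2\geqslant|l_\tht(\psi_0^{(1)})|^2>0$. We may assume $k\geqslant2$, the case $k=1$ being immediate. I then set $f_1:=L/\|L\|_{\mathds{C}^k}$; if $L,L'$ are linearly independent I put $g:=L'-\|L\|_{\mathds{C}^k}^{-2}(L',L)_{\mathds{C}^k}L\neq0$ and $f_2:=g/\|g\|_{\mathds{C}^k}$, while if $L'$ is a multiple of $L$ I take $f_2$ to be any unit vector orthogonal to $f_1$. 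Extend $\{f_1,f_2\}$ to an orthonormal basis $\{f_1,\ldots,f_k\}$ of $\mathds{C}^k$ and define $a(\tht)$ by $a_{ji}:=\overline{(f_j)_i}$; its rows form an orthonormal system, so $a$ is unitary, and $(aL)_j=(L,f_j)_{\mathds{C}^k}$, $(aL')_j=(L',f_j)_{\mathds{C}^k}$. Since $L=\|L\|_{\mathds{C}^k}f_1$ and $L'\in\mathrm{span}\{f_1,f_2\}$ by construction, we get $(aL)_j=0$ for $j\geqslant2$ and $(aL')_j=0$ for $j\geqslant3$, which is precisely (\ref{3.5a}); note $a$ is not unique ($f_3,\ldots,f_k$ and a phase of $f_2$ remain free), but the quantities in (\ref{3.22}) are phase-independent.

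It then remains to evaluate the surviving entries, which yields (\ref{3.22}). We have $l_\tht(\Psi_0^{(1)})=(aL)_1=(L,f_1)_{\mathds{C}^k}=\|L\|_{\mathds{C}^k}$, which is the first identity. For the second, $l_\tht'(\Psi_0^{(2)})=(aL')_2=(L',f_2)_{\mathds{C}^k}$: if $L'$ is a multiple of $L$ this is zero, and the right-hand side of the second identity in (\ref{3.22}) also vanishes, since then $\|L\|_{\mathds{C}^k}^2\|L'\|_{\mathds{C}^k}^2=|(L',L)_{\mathds{C}^k}|^2$; if $L,L'$ are independent, then $g\perp L$ gives $(L',f_2)_{\mathds{C}^k}=\|g\|_{\mathds{C}^k}^{-1}(L',g)_{\mathds{C}^k}=\|g\|_{\mathds{C}^k}$ with
\begin{equation*}
\|g\|_{\mathds{C}^k}^2=\|L'\|_{\mathds{C}^k}^2-\frac{|(L',L)_{\mathds{C}^k}|^2}{\|L\|_{\mathds{C}^k}^2}=\frac{\|L\|_{\mathds{C}^k}^2\|L'\|_{\mathds{C}^k}^2-|(L',L)_{\mathds{C}^k}|^2}{\|L\|_{\mathds{C}^k}^2},
\end{equation*}
the asserted value. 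Finally, since $L(\tht)$ and $L'(\tht)$ depend smoothly on $\tht$ and $\|L(\tht)\|_{\mathds{C}^k}$ stays bounded away from zero on $[0,2\pi]$, the construction is admissible for all $\tht$ and (\ref{3.22}) holds uniformly.

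The argument is essentially one Gram--Schmidt step, so there is little substance; I do not expect a serious obstacle. The two points deserving attention are extracting the non-vanishing of $L(\tht)$ from the nondegeneracy condition (\ref{2.10}) and isolating the degenerate configuration $L'\parallel L$, in which $f_2$ is no longer determined by $L$ and $L'$ yet (\ref{3.22}) still holds with both sides equal to zero; I expect the latter to be the sole place where care is genuinely required.
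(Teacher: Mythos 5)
Your proof is correct, and it is a cleaner execution of the same underlying idea. Both you and the paper begin identically: the first row of $a$ is $\overline{L}/\|L\|_{\mathds{C}^k}$, which immediately gives $l_\tht(\Psi_0^{(1)})=\|L\|_{\mathds{C}^k}$ and kills $l_\tht(\Psi_0^{(j)})$ for $j\geqslant 2$, with (\ref{2.10}) guaranteeing $l_\tht(\psi_1)\neq0$ and hence $\|L\|_{\mathds{C}^k}>0$ for every $\tht$. Where you diverge is the second step. The paper stays in the function space: it introduces the auxiliary functions $\widetilde{\psi}_i=l_\tht(\psi_i)\psi_1-l_\tht(\psi_1)\psi_i$, which span $\ker l_\tht=(\Psi_0^{(1)})^\perp$ but are not orthonormal, so it must form their Gram matrix $G$, orthonormalize via $G^{-1/2}$, pick the distinguished direction $z_2$ there, and finally evaluate $|l'_\tht(\Psi_0^{(2)})|^2$ through an explicit formula for $G^{-1}$ and a ``straightforward calculation.'' You instead observe that $l_\tht(\Psi_0^{(j)})=(aL)_j$ and $l'_\tht(\Psi_0^{(j)})=(aL')_j$, so the whole problem is one Gram--Schmidt step on the pair $L,L'$ in $\mathds{C}^k$: with $f_1=L/\|L\|_{\mathds{C}^k}$ and $f_2=g/\|g\|_{\mathds{C}^k}$, $g=L'-\|L\|_{\mathds{C}^k}^{-2}(L',L)_{\mathds{C}^k}L$, the second identity in (\ref{3.22}) is just $\|g\|_{\mathds{C}^k}^2$, read off without any Gram-matrix inversion. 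You also isolate the degenerate case $L'\parallel L$ exactly as the paper does (its case $G^{-1/2}(l'_\tht(\psi_1)\widetilde{L}-l_\tht(\psi_1)\widetilde{L}')=0$), where both sides of the second identity vanish by the Cauchy--Schwarz equality case. What your route buys is transparency and brevity; what the paper's buys is essentially nothing extra here, although its function-space phrasing sits closer to how the diagonalization conditions (\ref{3.5a}) are actually used later in the matching procedure. Both arguments are complete; yours is the one I would keep.
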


We shall prove this lemma later after the formal construction of the asymptotics. Throughout the formal construction the lemma is assumed to hold true.

By $\psi_\e^{(j)}$ we denote indicate the eigenfunctions associated with $\l_\e^{(j)}$. Outside small neighborhoods of points $M_\pm$ we construct the asymptotics for $\psi_\e^{(j)}$ as
\begin{align}
&\psi_\e^{(1)}(x,\tht)=\Psi_0^{(1)}(x,\tht)+\ln^{-1}\e \Psi_{0,1}^{(1)}(x,\tht)+\Odr(\ln^{-2}\e),\label{3.1}
\\
&\psi_\e^{(2)}(x,\tht)=\Psi_0^{(2)}(x,\tht)+\e^2 \Psi_{1,0}^{(2)}(x,\tht)+\Odr(\e^2\ln^{-1}\e),\label{3.2}
\\
&\psi_\e^{(j)}(x,\tht)=\Psi_0^{(j)}(x,\tht)+\Odr(\e^3),\quad j=3,\ldots,k.\label{3.3}
\end{align}
The asymptotics for the perturbed eigenvalues are constructed as (\ref{2.5}), (\ref{2.6}), (\ref{2.7}), where the coefficients $\l_{0,1}^{(1)}$, $\l_{1,0}^{(2)}$ are to be determined. We substitute these asymptotics into the eigenvalue equation for $\Hep$ (rewritten as a boundary value problem) and equate the coefficients at the like powers of $\e$ and $\ln\e$. It gives the boundary value problems for $\Psi_{0,1}^{(1)}$ and $\Psi_{1,0}^{(2)}$:
\begin{equation}\label{3.11}
(-\D+V-\l_0)\Psi_{p,q}^{(j)}=\l_{p,q}^{(j)}\Psi_0^{(j)}\quad \text{in}\quad \om,\qquad \frac{\p\Psi_{p,q}^{(j)}}{\p\nu}=0\quad \text{on} \quad \p\om,
\end{equation}
where $(j,p,q)=(1,0,1)$ or $(j,p,q)=(2,1,0)$. As in \cite{Na1}, \cite{Ga1}, we shall need solutions to these problems with certain singularities at points $M_\pm$.
We shall determine the structure of these singularities later as a result of matching exterior expansions (\ref{3.1}), (\ref{3.2}), (\ref{3.3}) and inner ones. The latter are constructed in terms of rescaled variables $\xi^\pm:=(\xi^\pm_1,\xi^\pm_2)$,  $\xi_1^-:=-x_2\e^{-1}$, $\xi_2^-:=x_1\e^{-1}$, $\xi_1^+:=x_2\e^{-1}$, $\xi_2^+:=(1-x_1)\e^{-1}$ as follows,
\begin{align}
&\psi_\e^{(1)}(x,\tht)=\ln^{-1}\e \Phi_{0,\pm}^{(1)}(\xi^\pm,\tht,\e)+\Odr(\ln^{-2}\e),
\nonumber
\\
&\psi_\e^{(2)}(x,\tht)=\e \Phi_{1,\pm}^{(2)}(\xi^\pm,\tht,\e)+\Odr(\e\ln^{-1}\e),\label{3.13}
\\
&\psi_\e^{(j)}(x,\tht)=\Odr(\e^2),\quad j=3,\ldots,k.\label{3.14}
\end{align}
These expansions are used in vicinities of points $M_\pm$, respectively. We substitute these expansions and (\ref{2.5}), (\ref{2.6}), (\ref{2.7}) into the eigenvalue equation for $\Hep$ and rewrite it as a boundary value problem, pass to $\xi^\pm$ and equate the coefficients at the like powers of $\e$ and $\ln^{-1}\e$. It yields the boundary value problem for $\Phi_{p,q}^{(j)}$:
\begin{equation}\label{3.15}
\begin{aligned}
-&\D\Phi_{p,\pm}^{(j)}=0\quad \text{as} \quad \xi_2>0,\qquad \frac{\p\Phi_{p,\pm}^{(j)}}{\p\xi_2}=0\quad \text{on}\quad \G,\qquad \frac{\p\Phi_{p,+}^{(j)}}{\p \xi_2}=-\E^{\iu\tht}\frac{\p\Phi_{p,-}^{(j)}}{\p \xi_2}\quad \text{on}\quad \g,
\\
&\Phi_{p,+}^{(j)}=\E^{\iu\tht}\Phi_{p,-}^{(j)}\quad \text{on}\quad \g,\quad \xi=(\xi_1,\xi_2),\quad \g:=\{\xi: |\xi_1|<1, \xi_2=0\},\quad \G:=O\xi_1\setminus\overline{\g}.
\end{aligned}
\end{equation}

\begin{figure}[t]
\includegraphics[scale=0.5]{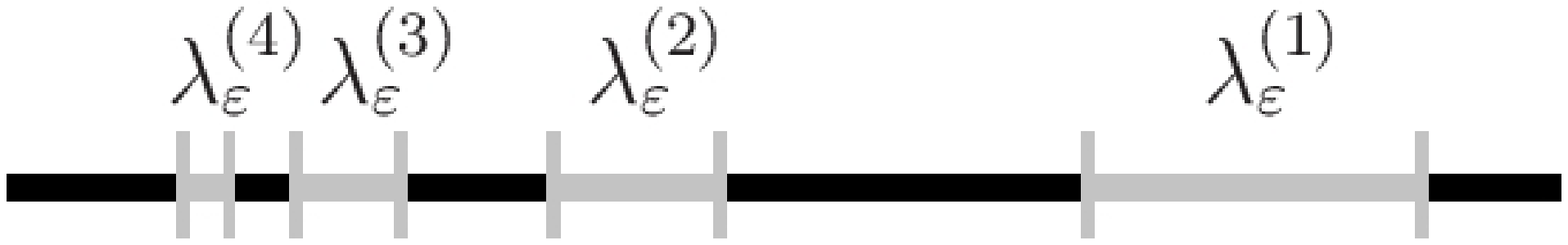}
\centerline{Figure 4. Band spectrum}
\end{figure}

We let $X_0(\xi):=\RE\ln(\sin z+\sqrt{\sin^2 z-1})$, $z=\xi_1+\iu\xi_2$, where the branches of the logarithm and the square root are fixed by the requirements $\ln 1=0$, $\sqrt{1}=1$. This function is a harmonic one as $\xi_2>0$ and satisfies Dirichlet condition on $\g$ and Neumann one on $\G$.
We choose functions $\Phi_{0,\pm}^{(1)}$ as
\begin{equation}\label{3.18}
\Phi_{0,\pm}^{(1)}(\xi,\tht,\e)=b_{0,\pm}^{(1,1)} X_0(\xi)+b_{0,\pm}^{(1,2)}\ln\e,
\end{equation}
write its asymptotics as $\xi^\pm\to\infty$ and pass to variables $x$. It implies
\begin{align*}
\Phi_{0,\pm}^{(1)}(\xi_\pm,\tht,\e)=\big(b_{0,\pm}^{(1,2)}(\tht)- b_{0,\pm}^{(1,1)}(\tht)\big)\ln\e+ b_{0,\pm}^{(1,1)}\big(\ln|x-M_\pm|+\ln 2\big)+\Odr(\e^2|x-M_\pm|^{-2}).
\end{align*}
According to the method of matching asymptotic expansions, this asymptotics should coincide with the asymptotic behavior of the inner expansion as $x\to M_\pm$. Expanding $\Psi_0^{(1)}$ by Taylor formula as $x\to M_\pm$ and writing the matching condition, we obtain that $\Psi_{0,1}^{(1)}$ should have the singularities
\begin{equation}\label{3.16}
\Psi_{0,1}^{(1)}(x,\tht)=b_{0,\pm}^{(1,1)}\ln|x-M_\pm|+\Odr(1),\quad x\to M_\pm,
\end{equation}
and the identities
\begin{equation}\label{3.17}
b_{0,\pm}^{(1,2)}(\tht)-b_{0,\pm}^{(1,1)}(\tht)=\Psi_0^{(1)}(M_\pm,\tht)
\end{equation}
should be satisfied. As it follows from (\ref{3.18}), the boundary conditions on $\g$ in (\ref{3.15}) hold true if and only if $b_{0,+}^{(1,2)}=\E^{\iu\tht}b_{0,-}^{(1,2)}$,
$b_{0,+}^{(1,1)}=-\E^{\iu\tht}b_{0,-}^{(1,1)}$, and by (\ref{3.17}) it yields
\begin{align*}
b_{0,\pm}^{(1,1)}=\frac{1}{2}\big(\Psi_0^{(1)}(M_\mp,\tht)\E^{\pm\iu\tht} - \Psi_0^{(1)}(M_\pm,\tht)\big),\quad b_{0,\pm}^{(1,2)}=\frac{1}{2} \big( \Psi_0^{(1)}(M_\mp,\tht)\E^{\pm\iu\tht} + \Psi_0^{(1)}(M_\pm,\tht)\big).
\end{align*}
The solvability conditions of boundary value problem (\ref{3.11}), (\ref{3.16}) is obtained in a standard way by integrating by parts with the singularities taken into account (see, for instance, \cite[Ch. I\!I\!I, Sec. 3, Lm. 3.1]{Il}):
\begin{equation}
\l_{0,1}^{(1)}(\tht)\d_{1j}=-\frac{\pi}{2} \overline{\big(
\Psi_0^{(j)}(M_+,\tht)\E^{-\iu\tht}-\Psi_0^{(j)}(M_-,\tht)
\big)}\big(
\Psi_0^{(1)}(M_+,\tht)\E^{-\iu\tht}-\Psi_0^{(1)}(M_-,\tht)
\big),
\end{equation}
as $j=1,\ldots,k$. Due to (\ref{3.5a}), these identities are satisfied for $j=2,\ldots,k$, while as $j=1$, by Lemma~\ref{lm3.1} it implies the first formula in (\ref{2.8a}).

\begin{remark}\label{rem1}
If we assume the presence of the terms of order $\Odr(\ln^{-1}\e)$ in (\ref{3.2}), (\ref{3.3}), (\ref{3.13}), (\ref{3.14}), it can be constructed in the same way as $\Psi_{0,1}^{(1)}$ and $\Phi_{0,\pm}^{(1)}$. But then due to (\ref{3.5a}) such terms vanish and this is why we apriori do not write them in the asymptotics.
\end{remark}

The construction of functions $\Psi_{1,0}^{(2)}$ and $\Phi_{1,\pm}^{(2)}$ is similar to the above arguments. Functions $\Phi_{1,\pm}^{(2)}$ read as
\begin{equation*}
\Phi_{1,\pm}^{(2)}(\xi,\tht,\e)=b_{1,\pm}^{(2,1)}X_1(\xi)+b_{1,\pm}^{(2,2)}\xi_1,\quad X_1(\xi):=\RE\sqrt{z^2-1},
\end{equation*}
while the singularities of $\Psi_{1,0}^{(2)}$ are
\begin{equation}\label{3.20}
\Psi_{1,0}^{(2)}=-\frac{b_{1,\pm}^{(2,1)}}{2}r_\pm^{-1}\cos\phi_\pm + b_{1,\pm}^{(2,3)} \ln|x-M_\pm|+\Odr(1),\quad x\to M_\pm.
\end{equation}
Here $(r_-,\phi_-)$ are the polar coordinates associated with $(-x_2,x_1)$, while $(r_+,\phi_+)$ are the polar coordinates associated with $(x_2,1-x_1)$. The identities $b_{1,+}^{(2,2)}=\E^{\iu\tht}b_{1,-}^{(2,2)}$,
$b_{1,+}^{(2,1)}=-\E^{\iu\tht}b_{1,-}^{(2,1)}$,
\begin{equation*}
b_{1,-}^{(2,1)}+b_{1,-}^{(2,2)}=-\frac{\p\Psi_0^{(2)}}{\p x_2}(M_-,\tht),\quad b_{1,+}^{(2,1)}+b_{1,+}^{(2,2)}=\frac{\p\Psi_0^{(2)}}{\p x_2}(M_+,\tht)
\end{equation*}
should be obeyed. One more identity to be satisfied is $b_{1,+}^{(2,3)}=-\E^{\iu\tht}b_{1,-}^{(2,3)}$; it appears as a result of matching subsequent terms in the asymptotics. These identity allow us to determine $b_{q,\pm}^{(2,p)}$:
\begin{equation}\label{3.23}
\begin{aligned}
&b_{1,\pm}^{(2,1)}=\pm\frac{1}{2}\left(\E^{\pm\iu\tht}\frac{\p\Psi_0^{(2)}}{\p x_2}(M_\mp,\tht)+\frac{\p\Psi_0^{(2)}}{\p x_2}(M_\pm,\tht)\right),
\\
&b_{1,\pm}^{(2,2)}=\pm\frac{1}{2}\left(\frac{\p\Psi_0^{(2)}}{\p x_2}(M_\pm,\tht)-\E^{\pm\iu\tht}\frac{\p\Psi_0^{(2)}}{\p x_2}(M_\mp,\tht)\right).
\end{aligned}
\end{equation}
In view of the above identities
the solvability conditions of problem (\ref{3.15}), (\ref{3.20}) for $\Psi_{1,0}^{(2)}$ are
\begin{equation*}
\l_{1,0}^{(2)}\d_{2j}=
\pi b_{1,-}^{(2,3)}\overline{\big(\Psi_0^{(j)}(M_-,\tht) -\E^{-\iu\tht}\Psi_0^{(j)}(M_+,\tht)\big)}-\frac{\pi b_{1,-}^{(2,1)}}{4} \overline{\Big(\frac{\p\Psi_0^{(j)}}{\p x_2}(M_-,\tht)+\E^{-\iu\tht}\frac{\p\Psi_0^{(j)}}{\p x_2}(M_+,\tht)\Big)}.
\end{equation*}
As $j=3,\ldots,k$, these identities hold true due to (\ref{3.5a}). As $j=2$, we get the latter formula in (\ref{2.8a}), and by the identity for $j=1$ we can determine $b_{1,-}^{(2,3)}$.

\begin{remark} If we try to construct the terms of order $\Odr(\e^3)$ in (\ref{3.3}) and of order $\Odr(\e^2)$ in (\ref{3.14}), they vanish due to (\ref{3.5a}).
\end{remark}

The formal construction of leading terms in the asymptotics is complete.

\begin{proof}[Proof of Lemma~\ref{lm3.1}]
We let $a_{1j}(\tht):=\overline{l_\tht(\psi_j)} \|L\|_{\mathds{C}^k}^{-1}$,
$j=1,\ldots,k$. These quantities are well-defined for all $\tht\in[0,2\pi)$ since $l_\tht(\psi_1)\not=0$ for all $\tht$ thanks to (\ref{2.10}). We also assume that the vectors $(a_{i1},\ldots,a_{ik})$, $i=1,\ldots,k$ are orthonormalized in $\mathds{C}^k$. Then it is easy to check that functions $\Psi_0^{(j)}$ are orthonormalized in $L_2(\om)$  and the former identities in (\ref{3.5a}) and (\ref{3.23}) are satisfied.

In order to obey the latter identities in (\ref{3.5a}) and (\ref{3.23}), we first consider the functions $\widetilde{\psi}_i(x,\tht):=l_\tht(\psi_i)\psi_1(x)-l_\tht(\psi_1)\psi_i(x)$, $i=2,\ldots,k$. Due to (\ref{3.22}), we have $l_\tht(\psi_1)\not=0$ for all $\tht\in[0,2\pi)$ and this is why functions $\widetilde{\psi}_i$ are linear independent. It is straightforward to check that these functions satisfy $l_\tht(\widetilde{\psi}_i)=0$, $(\widetilde{\psi}_i,\Psi_0^{(1)})_{L_2(\om)}=0$, $i=2,\ldots,k$. Hence, the needed functions $\Psi_0^{(j)}$, $j=2,\ldots,k$, are linear combinations of $\widetilde{\psi}_i$, $i=2,\ldots,k$. The Gram matrix $G=\big((\widetilde{\psi}_i,\widetilde{\psi}_j)_{L_2(\om)} \big)_{i,j=2,\ldots,k}$  reads as $G=\widetilde{L}^*\widetilde{L}+|l_\tht(\psi_1)|^2E$, $\widetilde{L}:=(l_\tht(\psi_2),\ldots,l_\tht(\psi_k))$, $E$ is the unit $(k-1)\times(k-1)$ matrix, $^*$ denotes the adjoint matrix.  Matrix $G$ is positive definite and Hermitian. Hence, the matrix $G^{-1/2}$ is well-defined.

We let
\begin{equation*}
z_2:=\big|G^{-1/2}(l'_\tht(\psi_1)\widetilde{L}-l_\tht(\psi_1)\widetilde{L}')\big|^{-1} G^{-1/2}(l'_\tht(\psi_1)\widetilde{L}-l_\tht(\psi_1)\widetilde{L}'),\quad \widetilde{L}:=(l'_\tht(\psi_2),\ldots,l'_\tht(\psi_k)),
\end{equation*}
and by $z_3$, \ldots, $z_k$ we denote vectors in $\mathds{C}^{k-1}$ so that $z_i$, $i=2,\ldots,k$, form an orthonormalized basis in $\mathds{C}^k$. If $G^{-1/2}(l'_\tht(\psi_1)\widetilde{L}-l_\tht(\psi_1)\widetilde{L}'=0$, as $z_2$, \ldots, $z_k$ we take the standard basis in $\mathds{C}^{k-1}$. Then it is easy to check  that the functions $\Psi_0^{(i)}=\sum\limits_{j=2}^{k}b_{ij}\widetilde{\psi}_j$, $i=2,\ldots,k$, $(b_{ij})_{ij=2,\ldots,k}=(z_2 \cdots z_k)^* G^{-1/2}$ satisfy the latter identities in (\ref{3.5a}) and
\begin{equation*}
|l'_\tht(\Psi_0^{(2)})|^2=\big|G^{-1/2}(l'_\tht(\psi_1)\widetilde{L} -l_\tht(\psi_1)\widetilde{L}')\big|^2
=\big(l'_\tht(\psi_1)\widetilde{L}-l_\tht(\psi_1)\widetilde{L}', G^{-1}(l'_\tht(\psi_1)\widetilde{L}-l_\tht(\psi_1)\widetilde{L}')
\big)_{\mathds{C}^{k-1}}.
\end{equation*}
Employing this identity and
\begin{equation*}
G^{-1}=\frac{1}{|l_\tht(\psi_1)|^2}\left(E-\frac{\widetilde{L}^*\widetilde{L}} {\|L\|_{\mathds{C}^{k-1}}^2}\right),
\end{equation*}
by straightforward calculations we arrive at the latter identity in (\ref{3.22}).
\end{proof}

\begin{acknowledgments}

The author thanks Konstantin Pankrashkin for attracting my attention to this problem and for valuable discussions.

The work is partially supported by RFBR, by the grant of the President of Russia for young scientists-doctors of sciences (MD-183.2014.1) and the fellowship of Dynasty foundation for young mathematicians.

\end{acknowledgments}

\end{document}